\theoremstyle{plain}
\newtheorem{theorem}{Theorem}
\newtheorem{corollary}[theorem]{Corollary}
\newtheorem{proposition}[theorem]{Proposition}
\numberwithin{equation}{section}
\DeclareMathOperator{\ch}{ch}
\begin{document}

\title{List coloring of matroids and base exchange properties}

\author[Micha\l\ Laso\'{n}]{Micha\l\ Laso\'{n}}

\dedicatory{\upshape
Institute of Mathematics of the Polish Academy of Sciences,\\ ul.\'{S}niadeckich 8, 00-656 Warszawa, Poland\\ \textmtt{michalason@gmail.com}
\smallskip
\\ Theoretical Computer Science Department,\\ Faculty of Mathematics and
Computer Science, Jagiellonian University,\\ ul.{\L}ojasiewicza 6, 30-348 Krak\'{o}w, Poland
\\ \textmtt{mlason@tcs.uj.edu.pl}}

\thanks{Research partially supported by the Polish National Science Centre grant no. 2011/03/N/ST1/02918. The paper was completed during author's stay at Freie Universit\"at Berlin in the frame of Polish Ministry ``Mobilno\'s\'c Plus'' program.}
\keywords{Matroid coloring, List coloring, Acyclic coloring, Base exchange property.}

\begin{abstract}
A coloring of a matroid is an assignment of colors to the elements of its ground set. We restrict to proper colorings -- those for which elements of the same color form an independent set. Seymour proved that a $k$-colorable matroid is also colorable from any lists of size $k$.

We generalize this theorem to the case when lists have still fixed sizes, but not necessarily equal. For any fixed size of lists assignment $\ell$, we prove that, if a matroid is colorable from a particular lists of size $\ell$, then it is colorable from any lists of size $\ell$. This gives an explicit necessary and sufficient condition for a matroid to be list colorable from any lists of a fixed size.

As an application, we show how to use our condition to derive several base exchange properties.
\end{abstract}

\maketitle

\section{Introduction}

Let $M$ be a matroid on a ground set $E$ (we refer the reader to \cite{Ox92} for a background of matroid theory). A \emph{coloring} of $M$ is an assignment of colors to the elements of $E$. In analogy to graph theory we say that a coloring is \emph{proper} if elements of the same color form an independent set in the matroid. Via this correspondence one can define for matroids all chromatic parameters studied for graphs. 

The \emph{chromatic number} of a loopless matroid $M$, denoted by $\chi(M)$, is the minimum number of colors in a proper coloring of $M$. For instance, if $M$ is a graphic matroid obtained from a graph $G$, then $\chi(M)$ is the least number of colors needed to color edges of $G$ so that no cycle is monochromatic. This number is known as the \emph{arboricity} of the underlaying graph $G$. 

For matroids the chromatic number can be easily expressed in terms of rank function. Extending a theorem of Nash-Williams \cite{Na64} for graph arboricity, Edmonds \cite{Ed65} proved the following formula
\begin{equation*} 
\chi(M)=\max_{\emptyset\neq A\subset E}\left\lceil\frac{\left\vert A\right\vert}{r(A)}\right\rceil,
\end{equation*}
where $r$ is the rank function of a loopless matroid $M$ on a ground set $E$. Some game-theoretic versions of the chromatic number of a graph were already studied for matroids, see \cite{La12,La13}.

In this note we study list coloring of matroids. The concept of list coloring was initiated for graphs by Vizing \cite{Vi76}, and independently by Erd\H{o}s, Rubin and Taylor \cite{ErRuTa80}. Let us recall its definition in the matroid setting.

Suppose each element $e$ of the ground set $E$ of a matroid $M$ is assigned with a set (a \emph{list}) of colors $L(e)$. By the \emph{size} of \emph{list assignment} (or simply \emph{lists}) $L$ we mean the function $\ell$ satisfying $\ell(e)=\left\vert L(e)\right\vert$ for each $e\in E$. We say that matroid $M$ is \emph{colorable from lists} $L$ if there exists a proper coloring of $M$, such that each element receives a color from its list. The \emph{list chromatic number} (sometimes called \emph{choice number}) of a loopless matroid $M$, denoted by $\ch(M)$, is the minimum number $k$, such that $M$ is colorable from any lists of size at least $k$. 

Clearly, $\ch(M)\geq\chi(M)$. For graphs in general, the inequality between corresponding parameters is strict ($\ch(G)$ is not even bounded by a function of $\chi(G)$). A celebrated result of Galvin \cite{Ga95} asserts that for the line graph of a bipartite graph there is an equality. Surprisingly, Seymour \cite{Se98} proved that actually equality holds for all matroids. 

\begin{theorem}\label{TheoremSeymour}
For every loopless matroid $M$ there is an equality $\ch(M)=\chi(M)$.
\end{theorem}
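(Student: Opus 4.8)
The plan is to establish the nontrivial inequality $\ch(M)\le\chi(M)$, since $\ch(M)\ge\chi(M)$ is immediate: colorability from the constant lists $L(e)=\{1,\dots,\ch(M)\}$ produces a proper coloring with $\ch(M)$ colors. So put $k=\chi(M)$ and let $L$ be any list assignment with $|L(e)|\ge k$ for every $e\in E$; after shrinking each $L(e)$ to a $k$-element subset we may assume $|L(e)|=k$. For a color $c\in\bigcup_{e\in E}L(e)$ set $E_c=\{e\in E:c\in L(e)\}$ and let $M_c=M|E_c$, viewed as a matroid on the common ground set $E$ (every element outside $E_c$ being a loop). A proper coloring of $M$ from $L$ is the same thing as a partition $E=\bigsqcup_c I_c$ with each $I_c$ independent in $M_c$ --- assign color $c$ to the elements of $I_c$ --- so it suffices to produce such a partition.

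First I would invoke the matroid union theorem: a set is independent in the union $\bigvee_c M_c$ exactly when it partitions into independent sets of the $M_c$, so the desired partition of $E$ exists iff $r_{\bigvee_c M_c}(E)=|E|$. By the rank formula for the union, $r_{\bigvee_c M_c}(E)=\min_{A\subseteq E}\bigl(|E\setminus A|+\sum_c r_{M_c}(A)\bigr)$, and since $r_{M_c}(A)=r(A\cap E_c)$, the equality $r_{\bigvee_c M_c}(E)=|E|$ is equivalent to the single condition
\[
\sum_c r(A\cap E_c)\ \ge\ |A|\qquad\text{for every }A\subseteq E .
\]

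It remains to verify this inequality, which I would do by a short counting argument. Fix $A\subseteq E$. Since $\chi(M)=k$, Edmonds' formula gives $r(B)\ge|B|/k$ for every $B\subseteq E$ ($B=\emptyset$ being trivial, and for $B\ne\emptyset$ looplessness gives $r(B)\ge1$, so the ceiling may be dropped). Applying this with $B=A\cap E_c$ and summing over all colors,
\[
\sum_c r(A\cap E_c)\ \ge\ \frac1k\sum_c|A\cap E_c|\ =\ \frac1k\sum_{e\in A}\bigl|\{c:c\in L(e)\}\bigr|\ =\ \frac1k\sum_{e\in A}|L(e)|\ =\ |A| ,
\]
the middle step being just an interchange of summation order. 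This is exactly what was needed.

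The only place demanding care is the matroid-union reduction --- making sure that ``independent in $\bigvee_c M_c$'' really corresponds to ``partitions into color-legal independent sets of $M$'' and that the minimum formula collapses to precisely $\sum_c r(A\cap E_c)\ge|A|$. Once that reformulation is in hand the proof is essentially the two-line averaging computation above, so I do not anticipate any genuine obstacle beyond setting up the union correctly.
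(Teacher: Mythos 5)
Your proof is correct. Both your argument and the paper's route to this theorem begin with the same reduction: a coloring from lists $L$ is a partition of $E$ into sets $I_c$ independent in the restrictions $M|E_c$, and by the Matroid Union Theorem such a partition exists iff $\sum_c r(A\cap E_c)\ge|A|$ for every $A\subseteq E$. Where you diverge is in how this rank inequality is verified. The paper does not prove Theorem~1 directly (it cites Seymour); its own route is through the more general Theorem~2, whose proof compares $\sum_c r(A\cap E_c)$ with the corresponding sum for the canonical lists $L_\ell(e)=\{1,\dots,\ell(e)\}$ by an uncrossing induction using submodularity (replacing incomparable pairs $Q_k,Q_l$ by $Q_k\cup Q_l$ and $Q_k\cap Q_l$); that argument needs only the hypothesis that the canonical lists admit a coloring and works for lists of unequal sizes. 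You instead verify the inequality directly from Edmonds' formula via $r(B)\ge|B|/k$ and the double count $\sum_c|A\cap E_c|=\sum_{e\in A}|L(e)|=k|A|$. This is essentially Seymour's original averaging argument: shorter and more self-contained in the equal-size case, but it genuinely uses that every list has size exactly $k$ (so that $\tfrac1k\sum_{e\in A}|L(e)|=|A|$), and therefore does not extend to the unequal-size setting that motivates the paper's Theorem~2. A minor remark: you only need the easy direction of Edmonds' formula here, namely that a $k$-coloring of $M$ restricted to $B$ forces $|B|\le k\,r(B)$, so the appeal to the full formula can be replaced by that one-line observation.
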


Seymour's theorem can be rephrased by saying that the following conditions are equivalent:
\begin{enumerate}
\item matroid $M$ is colorable from the lists $L(e)=\{1,\dots,k\}$,
\item matroid $M$ is colorable from any lists of size $k$.
\end{enumerate}

Our main result is a generalization of Seymour's Theorem \ref{TheoremSeymour} to the setting, where sizes of lists are still fixed, but not necessarily equal. 

\begin{theorem}\label{main} Let $M$ be a matroid and let $\ell$ be a lists size function. Then the following conditions are equivalent:
\begin{enumerate}
\item matroid $M$ is colorable from the lists $L_{\ell}(e)=\{1,\dots,\ell(e)\}$,
\item matroid $M$ is colorable from any lists of size $\ell$.
\end{enumerate}
\end{theorem}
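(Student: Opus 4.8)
The implication $(2)\Rightarrow(1)$ is trivial, since $L_{\ell}$ is a particular list assignment of size $\ell$; the content is $(1)\Rightarrow(2)$. The plan is to recast ``colorable from $L$'' as a single inequality for the rank function $r$ of $M$, and then to show that the assignment $L_{\ell}$ produces the most demanding such inequality. For a list assignment $L$ and a color $c$ write $E_{c}=\{e\in E:\ c\in L(e)\}$, and let $M_{c}$ be the matroid obtained from $M$ by turning every element outside $E_{c}$ into a loop, so that the independent sets of $M_{c}$ are exactly the independent subsets of $E$ contained in $E_{c}$. A proper coloring of $M$ from $L$ is the same thing as a partition of $E$ into color classes such that the class of each color $c$ is independent in $M_{c}$; equivalently, it exhibits $E$ as an independent set of the matroid union $\bigvee_{c}M_{c}$. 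By the matroid union theorem this is possible if and only if
\begin{equation*}
\sum_{c} r(A\cap E_{c})\ \geq\ |A|\qquad\text{for every }A\subseteq E .
\end{equation*}
For $L=L_{\ell}$ the sets $E_{c}$ form the nested family $E_{c}^{\ell}:=\{e\in E:\ \ell(e)\geq c\}$, $c=1,2,\dots$, so condition $(1)$ is precisely the above inequality for $(E_{c}^{\ell})_{c}$.

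Thus $(1)\Rightarrow(2)$ reduces to the following combinatorial claim: for a submodular set function $f$ on $E$ and a function $m\colon E\to\{0,1,2,\dots\}$, the minimum of $\sum_{c}f(F_{c})$ over all finite families $(F_{c})_{c}$ of subsets of $E$ in which every $e$ lies in exactly $m(e)$ of the sets is attained by the nested family $F_{c}=\{e:\ m(e)\geq c\}$. Granting this, fix a list assignment $L$ of size $\ell$ and a set $A\subseteq E$, and apply the claim with $f=r$ and with $m(e)=\ell(e)$ for $e\in A$, $m(e)=0$ otherwise: its nested family is $(A\cap E_{c}^{\ell})_{c}$, while $L$ produces the family $(A\cap E_{c})_{c}$, which has the same multiplicities. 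Hence $\sum_{c}r(A\cap E_{c})\geq\sum_{c}r(A\cap E_{c}^{\ell})\geq|A|$, the last step being condition $(1)$. Ranging over $A$ gives the rank inequality for $L$, i.e.\ colorability from $L$.

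To prove the combinatorial claim I would use submodular uncrossing. If a family $(F_{c})_{c}$ contains two incomparable sets $X$ and $Y$, replace them by $X\cup Y$ and $X\cap Y$. This does not change, for any element, the number of sets containing it, and $f(X\cup Y)+f(X\cap Y)\leq f(X)+f(Y)$ by submodularity, so $\sum_{c}f(F_{c})$ does not increase. Moreover the step strictly increases $\sum_{c}|F_{c}|^{2}$, the increment being $2\,|X\setminus Y|\cdot|Y\setminus X|>0$; since $\sum_{c}|F_{c}|^{2}$ is bounded in terms of $|E|$ and the number of sets in the family, after finitely many steps no two sets are incomparable, i.e.\ the family is a chain. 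A chain in which every $e$ lies in exactly $m(e)$ of the sets is, after relabeling the colors, exactly the nested family; so the nested family attains the minimum.

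The argument is essentially an assembly of two standard ingredients, the matroid union rank formula and submodular uncrossing, so I do not expect a serious obstacle; the care needed is in the reduction, chiefly in checking that the uncrossing process cannot stop before reaching a chain and that a chain with prescribed multiplicities must be the nested family. Finally, this recovers Theorem~\ref{TheoremSeymour}: for $\ell\equiv k$ one has $E_{c}^{\ell}=E$ for $c=1,\dots,k$, so condition $(1)$ reads $k\,r(A)\geq|A|$ for all $A$, which by Edmonds' formula is exactly $\chi(M)\leq k$.
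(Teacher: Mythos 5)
Your proposal is correct and follows essentially the same route as the paper: reduce colorability from $L$ to the matroid union rank inequality, then show by submodular uncrossing that the nested family $(A\cap Q_i^{\ell})$ minimizes $\sum_i r(A\cap Q_i)$ over all families with the prescribed multiplicities. The only (minor) difference is the termination certificate for the uncrossing: you use the strictly increasing potential $\sum_c|F_c|^2$, while the paper inducts on the number of incomparable pairs; both work.
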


As a corollary we get a strengthening of Seymour's Theorem \ref{TheoremSeymour}. Namely, a $k$-colorable matroid is also colorable from any lists of fixed size varying between $1$ and $k$, and average size at most $\frac{k+1}{2}$.

\begin{corollary}
Let $M$ be a $k$-colorable matroid, and let $I_{1},\dots,I_{k}$ be a partition of its ground set into independent sets (color classes). Define lists size function by $\ell(e)=i$ for elements $e\in I_{i}$. Then matroid $M$ is colorable from any lists of size $\ell$.
\end{corollary}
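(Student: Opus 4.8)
The plan is to reduce the statement immediately to Theorem \ref{main}. By that theorem, in order to prove that $M$ is colorable from \emph{any} lists of size $\ell$ it suffices to check that $M$ is colorable from the single canonical list assignment $L_{\ell}(e)=\{1,\dots,\ell(e)\}$. So the whole task is to exhibit one proper coloring of $M$ whose color on each element $e$ lies in $\{1,\dots,\ell(e)\}$.

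First I would write down the obvious candidate, which is essentially forced by the hypothesis: define $c(e)=i$ whenever $e\in I_{i}$. This is well defined since $I_{1},\dots,I_{k}$ partition the ground set. By the definition of $\ell$ we have $\ell(e)=i$ exactly when $e\in I_{i}$, so $c(e)=i\in\{1,\dots,i\}=L_{\ell}(e)$; that is, $c$ respects the lists $L_{\ell}$. Next I would observe that the set of elements receiving color $j$ under $c$ is precisely $I_{j}$, which is independent by assumption, so $c$ is a proper coloring. This establishes condition (1) of Theorem \ref{main}, and condition (2) — colorability from an arbitrary list assignment of size $\ell$ — then follows.

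I do not expect a genuine obstacle here: once Theorem \ref{main} is available the corollary is immediate, since all the difficulty lies in Theorem \ref{main} itself, whose role is to promote colorability from the special lists $L_{\ell}$ to colorability from any lists of the same prescribed sizes. The only additional remark worth recording is the estimate preceding the statement: for a fixed multiset of class sizes $|I_{1}|,\dots,|I_{k}|$ one is free to relabel the classes, and assigning the smaller indices to the larger classes makes the average list size $\bigl(\sum_{i} i\,|I_{i}|\bigr)\big/\bigl(\sum_{i}|I_{i}|\bigr)$ at most $\frac{k+1}{2}$ by the Chebyshev sum inequality (the factor $i-\frac{k+1}{2}$ is increasing in $i$ while $|I_{i}|$ is decreasing). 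This last point is not needed for the corollary as stated and could be omitted.
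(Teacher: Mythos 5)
Your proof is correct and is exactly the intended argument (the paper states this corollary without proof, as an immediate consequence of Theorem \ref{main}): the partition coloring $c(e)=i$ for $e\in I_i$ is proper and respects $L_\ell$ since $i\in\{1,\dots,i\}$, so condition (1) of Theorem \ref{main} holds and (2) follows. Your closing remark about relabeling classes in decreasing order of size to get average list size at most $\frac{k+1}{2}$ correctly accounts for the claim in the paper's introduction.
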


In the last section we make a link between list coloring and base exchange properties. We use our theorem as a tool to obtain easily several such properties. The idea is to choose suitable lists, such that existence of a proper coloring guarantees a particular exchange property. The crucial point is that lists may have different sizes (ex. if size of a list is $1$, then a color is already determined).

\section{Proof of Theorem \ref{main}}\label{Section2}

\begin{proof}
Clearly, condition $(1)$ follows from $(2)$. We argue the opposite implication. Let $L$ be a fixed list assignment of size $\ell$. Without loss of generality we can assume that all lists $L(e)$ are subsets of a finite set of integers $\{1,\dots,d\}$. Let us denote $Q_i=\{e\in E:i\in L(e)\}$, and respectively $Q^{\ell}_{i}$ for lists $L_{\ell}$. 

Consider matroids $M_1,\dots,M_d$, with $M_i$ equal to the restriction $M\vert_{Q_i}$ of $M$ to the set $Q_i$ (with the ground set trivially extended to $E$). It is straightforward that a proper coloring from lists $L$ exists if and only if it is possible to partition the ground set $E$ into subsets $I_1,\dots,I_d$ with $I_i$ independent in the matroid $M_i$ ($I_i$ is the color class of $i$). By the Matroid Union Theorem (see \cite{Ox92}) such a partition exists if and only if for every subset $A\subset E$ there is an inequality
\begin{equation*}
r(A\cap Q_1)+\cdots+r(A\cap Q_d)\geq\left\vert A\right\vert.
\end{equation*}

Analogously, a proper coloring from lists $L_{\ell}$ exists if and only if for every subset $A\subset E$ there is an inequality
\begin{equation*}
r(A\cap Q^{\ell}_1)+\cdots+r(A\cap Q^{\ell}_d)\geq\left\vert A\right\vert.
\end{equation*}

Thus, to prove that condition $(1)$ implies $(2)$ it is enough to show that for every subset $A\subset E$ there is an inequality
\begin{equation}\label{eq0}
r(A\cap Q_1)+\cdots+r(A\cap Q_d)\geq r(A\cap Q^{\ell}_1)+\cdots+r(A\cap Q^{\ell}_d).
\end{equation}

Notice that $\bigcup_iQ_i$ and $\bigcup_iQ^{\ell}_i$ are equal as multisets, since both $L$ and $L_\ell$ are list assignments of size $\ell$ (each $e\in E$ belongs to each of the unions exactly $\ell(e)$ times). We will show that the inequality $(\ref{eq0})$ is satisfied for any sets $Q_i$ satisfying $\bigcup_iQ_i=\bigcup_iQ^{\ell}_i$ as multisets. The proof is by induction on the number of pairs of sets $Q_k,Q_l$ such that $Q_k$ and $Q_l$ are incomparable in the inclusion order (none is contained in the other).

If the number of such pairs is zero, then $Q_i$ are linearly ordered by inclusion. Let us reorder them in such a way that $Q_1\supset Q_2\supset\dots\supset Q_d$. Then the equality $\bigcup_iQ_i=\bigcup_iQ^{\ell}_i$ implies that $Q_1=Q^{\ell}_1$, $Q_2=Q^{\ell}_2$, $\dots$, $Q_d=Q^{\ell}_d$, so the inequality $(\ref{eq0})$ is in fact an equality.

Suppose now that there exists a pair of sets $Q_k,Q_l$ incomparable in the inclusion order. Replace them in the family $\{Q_i\}_{i=1,\dots,d}$ by sets $Q_k\cup Q_l$ and $Q_k\cap Q_l$ to obtain a family $\{Q'_i\}_{i=1,\dots,d}$. Since $Q_k\cup Q_l=(Q_k\cup Q_l)\cup(Q_k\cap Q_l)$ as multisets, the sets $Q'_i$ also satisfy the multiset equality $\bigcup_iQ'_i=\bigcup_iQ^{\ell}_i$. Moreover, the number of pairs incomparable in the inclusion order among $Q'_i$ is lower than among sets $Q_i$. By the inductive assumption, the inequality $(\ref{eq0})$ holds for sets $Q'_i$. Combining it with submodularity of the rank function
\begin{equation*}
r(A\cap Q_k)+r(A\cap Q_l)\geq r(A\cap(Q_k\cup Q_l))+r(A\cap(Q_k\cap Q_l)),
\end{equation*}
we get inequality $(\ref{eq0})$ for sets $Q_i$. This completes the inductive step.
\end{proof}

\section{Applications}\label{Section3}

A family $\mathcal{B}$ of subsets of a finite set $E$, just from the definition, forms a set of bases of a matroid if it is non-empty, and if for every $B_1,B_2\in\mathcal{B}$ and $e\in B_1\setminus B_2$ there exists $f\in B_2\setminus B_1$, such that $B_1\cup f\setminus e\in\mathcal{B}$. 

In this case a stronger property holds. For every bases $B_1,B_2$ and $e\in B_1\setminus B_2$ there exists $f\in B_2\setminus B_1$, such that both $B_1\cup f\setminus e$ and $B_2\cup e\setminus f$ are bases. It is called \emph{symmetric exchange property}, and was discovered by Brualdi \cite{Br69}.

Surprisingly, even more is true. One can exchange symmetrically not only single elements, but also subsets. This property is known as \emph{multiple symmetric exchange}. We demonstrate usefulness of Theorem \ref{main} by using it as a tool to give easy proofs of multiple symmetric exchange property and its generalizations.

\begin{proposition} 
Let $B_1$ and $B_2$ be bases of a matroid $M$. Then for every $A_1\subset B_1$ there exists $A_2\subset B_2$, such that $(B_1\setminus A_1)\cup A_2$ and $(B_2\setminus A_2)\cup A_1$ are both bases.
\end{proposition}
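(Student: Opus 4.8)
The plan is to realize the desired exchange as the two color classes of a proper $2$-coloring of $M$, chosen through a list assignment to which Theorem~\ref{main} applies. First I would reduce to the case $B_1\cap B_2=\emptyset$ and $E=B_1\cup B_2$. Put $F=B_1\cap B_2$ and pass to the matroid $M'=(M/F)\setminus(E\setminus(B_1\cup B_2))$ on the ground set $(B_1\setminus F)\cup(B_2\setminus F)$, in which $B_1\setminus F$ and $B_2\setminus F$ are complementary bases. A solution $A_2'\subseteq B_2\setminus F$ for $M'$ with datum $A_1'=A_1\setminus F$ lifts to the solution $A_2=A_2'\cup(A_1\cap F)$ for $M$: one computes directly that $(B_1\setminus A_1)\cup A_2=F\cup\big(((B_1\setminus F)\setminus A_1')\cup A_2'\big)$ and $(B_2\setminus A_2)\cup A_1=F\cup\big(((B_2\setminus F)\setminus A_2')\cup A_1'\big)$, and adjoining the independent set $F$ to a base of $M/F$ gives a base of $M$.

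So assume now that $B_1$ and $B_2$ partition $E$; write $r=r(M)$, so $|E|=2r$, and set $C_1=B_1\setminus A_1$. Define a list-size function by $\ell(e)=1$ for $e\in B_1$ and $\ell(e)=2$ for $e\in B_2$. Condition $(1)$ of Theorem~\ref{main} holds, because coloring every element of $B_1$ with $1$ and every element of $B_2$ with $2$ is a proper coloring from the lists $L_\ell$: the color classes $B_1$ and $B_2$ are independent. Therefore Theorem~\ref{main} guarantees that $M$ is colorable from the particular lists $L$ of size $\ell$ defined by $L(e)=\{1\}$ for $e\in C_1$, $L(e)=\{2\}$ for $e\in A_1$, and $L(e)=\{1,2\}$ for $e\in B_2$.

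It remains to read off $A_2$ from such a coloring. Its color classes $I_1$ and $I_2$ are independent, partition $E$, and satisfy $C_1\subseteq I_1$ and $A_1\subseteq I_2$; since $|I_1|+|I_2|=2r$ and each has size at most $r$, both are bases. Putting $A_2=I_1\cap B_2$ we have $I_1\cap B_1=C_1$, so $I_1=(B_1\setminus A_1)\cup A_2$ and $I_2=(B_2\setminus A_2)\cup A_1$ are the two desired bases. The one genuine obstacle is the overlap $B_1\cap B_2$: when it is nonempty the color classes cannot both be bases (their sizes sum to $|B_1\cup B_2|<2r$), which is exactly what the contraction reduction in the first step circumvents; everything else is routine verification.
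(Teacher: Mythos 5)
Your proof is correct and follows essentially the same route as the paper: reduce to disjoint bases, assign lists of size $1$ on $B_1$ and size $2$ on $B_2$, verify condition $(1)$ of Theorem~\ref{main} with the coloring by bases, and read $A_2$ off the resulting color classes. The only cosmetic difference is the reduction step (you contract $B_1\cap B_2$ and delete the rest, while the paper adds parallel copies of the common elements), and you spell out more explicitly than the paper why the two independent color classes are in fact bases.
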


\begin{proof} 
Observe that by adding parallel elements to the elements of the intersection $B_1\cap B_2$ we can restrict to the case when bases $B_1$ and $B_2$ are disjoint. 

When bases $B_1,B_2$ are disjoint, then restrict matroid $M$ to their union. Let $\ell$ be a lists size function such that $\ell\vert_{B_1}\equiv 1$, and $\ell\vert_{B_2}\equiv 2$. It is easy to check that condition $(1)$ of Theorem \ref{main} is satisfied. 

Let $L$ be a list assignment of size $\ell$ which assigns list $\{1\}$ to elements of $A_1$, list $\{2\}$ to elements of $B_1\setminus A_1$, and list $\{1,2\}$ to elements of $B_2$. By Theorem \ref{main} there exists a proper coloring from these lists. Denote by $C_1$ elements colored with $1$, and by $C_2$ those colored with $2$. Now $A_2=C_2\cap B_2$ is a good choice, since sets $(B_1\setminus A_1)\cup A_2=C_2$ and $(B_2\setminus A_2)\cup A_1=C_1$ are independent. 
\end{proof}

Multiple symmetric exchange property can be slightly generalized. Instead of having a partition of one of bases into two parts we can have an arbitrary partition of it. We prove that for any such partition there exists a partition of the second basis which is consistent in two different ways.

\begin{proposition} 
Let $A$ and $B$ be bases of a matroid $M$. Then for every partition $B_1\sqcup\dots\sqcup B_k=B$ there exists a partition $A_1\sqcup\dots\sqcup A_k=A$, such that $(B\setminus B_i)\cup A_i$ are bases for all $1\leq i\leq k$.
\end{proposition}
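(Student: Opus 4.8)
I would deduce this from the previous Proposition (the multiple symmetric exchange property) by induction on $k$. As in its proof, by adding parallel copies of the elements of $A\cap B$ we may assume that $A$ and $B$ are disjoint; the base conditions found in the enlarged matroid transfer back to $M$, because a set $(B\setminus B_i)\cup A_i$ that is a base cannot contain two parallel copies of one element. The case $k=1$ is trivial (take $A_1=A$), so assume $k\ge 2$ and that the statement is known for $k-1$ blocks, for every matroid.

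Consider the coarser $2$-block partition $B_1$ and $B\setminus B_1=B_2\cup\dots\cup B_k$ of $B$, and apply the previous Proposition in $M$ to the bases $B$ and $A$ with $B_1$ as the distinguished subset of $B$. This gives a subset $A_1\subseteq A$ such that $(B\setminus B_1)\cup A_1$ is a base of $M$ and, setting $A'=A\setminus A_1$, the set $B_1\cup A'$ is a base of $M$. Since $B_1$ is independent, $A'$ is a base of the contraction $M/B_1$, and $B\setminus B_1$ is a base of $M/B_1$ as well (because $B=B_1\sqcup(B\setminus B_1)$ is a base of $M$). Now apply the inductive hypothesis to $M/B_1$, to its bases $A'$ and $B\setminus B_1$, and to the $(k-1)$-block partition $B_2,\dots,B_k$ of $B\setminus B_1$: it produces a partition $A'=A_2\sqcup\dots\sqcup A_k$ with $((B\setminus B_1)\setminus B_i)\cup A_i$ a base of $M/B_1$ for each $i\ge 2$. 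Since $B_1\subseteq B\setminus B_i$, adjoining $B_1$ turns this into $(B\setminus B_i)\cup A_i$, a base of $M$, for every $i\ge 2$. Together with the case $i=1$ and the partition $A=A_1\sqcup A_2\sqcup\dots\sqcup A_k$, this completes the induction.

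The only points needing care are the routine facts that, for an independent set $C$, a set containing $C$ is a base of $M$ precisely when the remaining part is a base of $M/C$, and that disjointness of $A$ and $B$ makes the contraction well defined. I expect the one real decision to be organizational — to peel off a single block and recurse inside a minor, rather than to try to realize all $k$ required bases simultaneously as the colour classes of a single proper colouring from a cleverly chosen list assignment, which cannot be done since those $k$ bases overlap far too much to partition the ground set. As an alternative the induction can be bypassed: it suffices to partition $A$ into sets $A_i$ with $A_i$ independent in $M/(B\setminus B_i)$ (a rank count then forces $|A_i|=|B_i|$, hence that $(B\setminus B_i)\cup A_i$ is a base), and by the Matroid Union Theorem this partition exists once $\sum_i r(S\cup(B\setminus B_i))\ge |S|+(k-1)r(B)$ holds for every $S\subseteq A$; this last inequality follows from $k-1$ applications of submodularity, exactly in the spirit of the proof of Theorem \ref{main}, since at each step the union of the two sets being combined is $S\cup B$ while their intersections telescope down to $S$.
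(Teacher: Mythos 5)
Your proof is correct, but it takes a genuinely different route from the paper. The paper stays within its advertised framework: it builds an auxiliary matroid in which each element of $B$ gets $k-1$ parallel copies, assigns list $\{1,\dots,k\}\setminus\{i\}$ to the copies of elements of $B_i$ and $\{1,\dots,k\}$ to elements of $A$, verifies condition $(1)$ of Theorem \ref{main}, and reads off all $k$ bases simultaneously as the colour classes $C_i$ (each $C_i$ picks up exactly one copy of every element of $B\setminus B_i$ together with $A_i=C_i\cap A$, and a cardinality count upgrades independence to being a base). So your aside that realizing the $k$ bases as colour classes of a single colouring ``cannot be done'' is mistaken -- the parallel-copy blow-up is precisely what makes the overlapping bases partition the enlarged ground set; this does not affect your argument, only your guess about the paper's. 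Your main proof instead peels off $B_1$ via the two-block Proposition, passes to the contraction $M/B_1$ (where $A\setminus A_1$ and $B\setminus B_1$ are bases), and recurses; the reduction to disjoint bases and the transfer back through parallel copies are handled correctly, and the routine fact that $X\supseteq C$ is a base of $M$ iff $X\setminus C$ is a base of $M/C$ is all that is needed. This buys a more elementary derivation that uses only the $k=2$ case plus standard minors, at the cost of an induction and of not illustrating the unequal-list-size machinery that is the point of the section. Your alternative via the Matroid Union Theorem, with the telescoping submodularity estimate $\sum_i r(S\cup(B\setminus B_i))\geq (k-1)r(S\cup B)+r(S)\geq (k-1)r(B)+|S|$, is also correct and is essentially the computation that the paper's proof hides inside Theorem \ref{main} and the parallel-copy construction.
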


\begin{proof}
As before we can assume that bases $A,B$ are disjoint, and restrict to their union. Consider matroid $M'$ equal to $M$, where elements of $B$ have $k-1$ parallel copies. So the ground set of $M'$ equals to $B^1\sqcup\dots\sqcup B^{k-1}\sqcup A$ . Let $\ell$ be a lists size function such that $\ell\vert_{B^i}\equiv k-1$, and $\ell\vert_{A}\equiv k$. It is easy to check that condition $(1)$ of Theorem \ref{main} is satisfied. 

Let $L$ be a list assignment of size $\ell$ which assigns list $\{1,\dots,k\}\setminus\{i\}$ to elements of $B_i$ and all its copies, and list $\{1,\dots,k\}$ to elements of $A$. By Theorem \ref{main} there exists a proper coloring from these lists. Denote by $C_i$ elements colored with $i$. Now $A_i=C_i\cap A$ is a good partition, since sets $(B\setminus B_i)\cup A_i$ are independent. 
\end{proof}

\begin{proposition} 
Let $A$ and $B$ be bases of a matroid $M$. Then for every partition $B_1\sqcup\dots\sqcup B_k=B$ there exists a partition $A_1\sqcup\dots\sqcup A_k=A$, such that $(A\setminus A_i)\cup B_i$ are all bases for $1\leq i\leq k$.
\end{proposition}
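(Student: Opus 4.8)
The plan is to mirror the proof of the previous proposition, but to set up the lists so that a proper coloring encodes a partition of $A$ that is consistent with $B$ ``in the other direction'' -- that is, each $(A\setminus A_i)\cup B_i$ is forced to be a basis rather than each $(B\setminus B_i)\cup A_i$. As before, I would first reduce to the case where $A$ and $B$ are disjoint by adding parallel copies of the elements of $A\cap B$ (which does not affect whether a set of the stated form is a basis), and then restrict the matroid to $A\cup B$, so that $A$ and $B$ are complementary bases of a matroid of rank $r=|A|=|B|$.

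Next I would build an auxiliary matroid $M'$ by replacing each element of $A$ with $k-1$ parallel copies, so that the ground set of $M'$ is $A^1\sqcup\dots\sqcup A^{k-1}\sqcup B$; note the roles of $A$ and $B$ are swapped relative to the previous proof. Define the lists size function $\ell$ by $\ell|_{A^j}\equiv k-1$ for each $j$ and $\ell|_{B}\equiv k$. The desired proper coloring should then be produced by the list assignment $L$ of size $\ell$ that gives each element of $A_i$ together with all its parallel copies the list $\{1,\dots,k\}\setminus\{i\}$, and each element of $B$ the list $\{1,\dots,k\}$. Given a proper coloring from $L$, let $C_i$ be the set of elements colored $i$ and put $A_i=C_i\cap A$; since no element of $A_i$ can receive color $i$, the color classes automatically realize the partition $A=A_1\sqcup\dots\sqcup A_k$, and each $C_i$ being independent in $M'$ forces $(A\setminus A_i)\cup B_i$ to be independent in $M$, hence a basis by cardinality.

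The one genuine step requiring checking is that condition $(1)$ of Theorem~\ref{main} holds, i.e.\ that $M'$ is colorable from the standard lists $L_\ell$ where $A^j$ gets $\{1,\dots,k-1\}$ and $B$ gets $\{1,\dots,k\}$. Here I would exhibit an explicit coloring: fix any bijective ``rainbow'' correspondence between the elements of $A$ and the elements of $B$ coming from the base exchange structure (for disjoint complementary bases one can pair $a\in A$ with $b\in B$ so that $A\setminus\{a\}\cup\{b\}$ and $B\setminus\{b\}\cup\{a\}$ are bases), colour $B$ and one copy of each element of $A$ so that a matched pair $a,b$ never share a colour among $\{1,\dots,k\}$, and colour the remaining $k-2$ copies of each element of $A$ with the leftover colours from $\{1,\dots,k-1\}$; since parallel copies of a single element are the only dependencies among the $A^j$, and $B$ is independent, this yields a proper colouring. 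This verification is routine and entirely parallel to the ``easy to check'' remark in the preceding proof, so I expect the only real subtlety to be bookkeeping the swapped roles of $A$ and $B$ and making sure the $k-1$ copies (rather than $k$) are enough colours to spread each element of $A$ across the classes other than the one it is forbidden from.
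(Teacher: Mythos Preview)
Your argument is circular: the list assignment $L$ you describe gives ``each element of $A_i$ together with all its parallel copies the list $\{1,\dots,k\}\setminus\{i\}$'', but the sets $A_i$ are precisely what you are trying to construct. Worse, the lists you write down never use the given partition $B_1\sqcup\dots\sqcup B_k$ at all (every element of $B$ receives the full list $\{1,\dots,k\}$), so nothing in the resulting coloring can reflect that partition. The subsequent extraction is also inconsistent: you set $A_i=C_i\cap A$ (elements coloured $i$) and then assert that ``no element of $A_i$ can receive colour $i$''. What has happened is that you literally transported the proof of the previous proposition with $A$ and $B$ interchanged; but swapping $A$ and $B$ there proves the statement ``for every partition of $A$ there is a partition of $B$ with $(A\setminus A_i)\cup B_i$ bases'', which has the wrong hypothesis.

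The paper's proof uses a different setup that you should compare with. It still takes $k-1$ parallel copies of $A$, but sets $\ell|_{A^j}\equiv k$ and $\ell|_{B}\equiv 1$, and chooses $L$ to assign the singleton $\{i\}$ to each element of $B_i$ and the full list $\{1,\dots,k\}$ to every copy of every element of $A$. Condition~$(1)$ is then genuinely trivial (colour $B$ with $1$ and $A^j$ with $j+1$). The clever point is the definition of $A_i$: since each $a\in A$ has $k-1$ copies receiving $k-1$ distinct colours, exactly one colour is missing, and one sets $A_i=\{a\in A:\text{no copy of }a\text{ is coloured }i\}$. Then $C_i$ projects to $B_i\cup(A\setminus A_i)$, which is therefore independent. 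The moral is that to encode a \emph{given} partition of $B$ you must restrict the lists on $B$ (here, to size $1$), not on $A$; your choice of $\ell$ makes that impossible.
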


\begin{proof}
We assume that bases $A,B$ are disjoint, and restrict to their union. Consider matroid $M'$ equal to $M$, where elements of $A$ have $k-1$ parallel copies. So the ground set of $M'$ equals to $A^1\sqcup\dots\sqcup A^{k-1}\sqcup B$ . Let $\ell$ be a lists size function such that $\ell\vert_{A^i}\equiv k$, and $\ell\vert_{B}\equiv 1$. It is easy to check that condition $(1)$ of Theorem \ref{main} is satisfied. 

Let $L$ be a list assignment of size $\ell$ which assigns list $\{i\}$ to elements of $B_i$, and list $\{1,\dots,k\}$ to elements of $A$ and all its copies. By Theorem \ref{main} there exists a proper coloring from these lists. Denote by $C_i$ elements colored with $i$. Now let $A_i$ contain all $a\in A$ such that no copy of $a$ is colored with $i$. Then sets $(A\setminus A_i)\cup B_i$ are independent, so it is a good partition. 
\end{proof}


\end{document}